\newtheorem{propo}{{\bf Proposition}}[section]
\newtheorem{lemma}[propo]{{\bf Lemma}} \newtheorem{theor}[propo]{{\bf
Theorem}} \newtheorem{ex}{{\sc Example}}[section]
\newenvironment{proof}{{\bf Proof.}}{$\Box$}
\def\R{{\mathbb R}}
\begin{document}

\vspace*{1.0in}

\begin{center}ON MINIMAL NON-ELEMENTARY LIE ALGEBRAS  
\end{center}
\bigskip

\begin{center} DAVID A. TOWERS 
\end{center}
\bigskip

\begin{center} Department of Mathematics and Statistics

Lancaster University

Lancaster LA1 4YF

England

d.towers@lancaster.ac.uk 
\end{center}
\bigskip

\begin{abstract} The class of minimal non-elementary Lie algebras over a field $F$ are studied. These are classified when $F$ is algebraically closed and of characteristic different from $2,3$. The solvable algebras in this class are also characterised over any perfect field.
\par 
\noindent {\em Mathematics Subject Classification 2000}: 17B05, 17B20, 17B30, 17B50.
\par
\noindent {\em Key Words and Phrases}: Lie algebras, solvable, Frattini ideal, elementary algebras. 
\end{abstract}

\section{Introduction}
Groups and Lie algebras that just fail to have a particular property have been studied extensively in the hope of gaining some insight into just what makes the group or algebra have that property.  They are also useful in constructing induction proofs. In group theory such groups are sometimes called {\em critical}.
\par

A Lie algebra $L$ is called {\em elementary} if the Frattini ideal, $\phi(S)$, of each of its subalgebras $S$, is trivial. It is called {\em minimal non-elementary} if each of its proper subalgebras is elementary but it is not elementary itself. These algebras were first studied by Towers in \cite{elem1}, and recently Stagg and Stitzinger (\cite{stit}) have classified such algebras when $L^2$ is nilpotent and the ground field is algebraically closed. In section 2 we extend this result first to any solvable Lie algebra over an algebraically closed field, and then show that there are no such non-solvable Lie algebras provided that the underlying field also has characteristic different from $2,3$. The final result characterises all minimal non-elementary solvable Lie algebras over a perfect field. An example is given to show that there are minimal non-elementary Lie algebras over the real field that are not of the type described in the result of Stagg and Stitzinger. 
\par

Throughout, $L$ will denote a finite-dimensional Lie algebra over a field $F$. Algebra direct sums will be denoted by $\oplus$, whereas direct sums of the vector space structure alone will be denoted by $\dot{+}$.

\section{The results}
We say that $L$ is an {\em $E$-algebra} if $\phi(S) \subseteq \phi(L)$ for every subalgebra $S$ of $L$.

\begin{lemma} Let $L$ be a minimal non-elementary Lie algebra over a perfect field $F$. Then $L$ is solvable if and only if $L^2$ is nilpotent.
\end{lemma}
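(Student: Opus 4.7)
The reverse implication is immediate: since $L/L^2$ is always abelian, $L$ is solvable whenever $L^2$ is nilpotent.

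For the forward implication, assume $L$ is solvable. Since no nonzero solvable Lie algebra equals its derived ideal, $L^2$ is a proper subalgebra of $L$, and the minimal non-elementary hypothesis forces $L^2$ to be elementary. The task then reduces to showing that this particular elementary solvable algebra $L^2$ is in fact nilpotent. Note that elementariness alone is not enough --- the two-dimensional non-abelian Lie algebra is solvable and elementary yet not nilpotent --- so the full hypothesis that the ambient $L$ (and not merely $L^2$) is minimal non-elementary must enter.

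My plan is to invoke the structural description of solvable elementary Lie algebras over a perfect field from \cite{elem1}: any such algebra $E$ admits a decomposition $E = A \dot{+} T$, with $A = E^2$ an abelian ideal and $T$ an abelian subalgebra of ad-semisimple elements. Applied to $E = L^2$ this presents $L^2$ as $(L^2)^2 \dot{+} T$ with $(L^2)^2$ abelian, so $L^2$ fails to be nilpotent precisely when the torus $T$ acts nontrivially on $(L^2)^2$.

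The main obstacle is ruling out such a nontrivial action from the hypotheses on $L$. My plan is to argue by contradiction: assume $T$ acts nontrivially; pick $t \in T$ with a nonzero eigenvalue on $(L^2)^2$ (available because the action is absolutely semisimple over the perfect field $F$); and combine $t$ with the associated eigenspace data and an element of $L \setminus L^2$ to exhibit a proper subalgebra $S \subsetneq L$ whose Frattini ideal is nonzero, contradicting the elementariness of every proper subalgebra of $L$. An alternative, potentially cleaner, route exploits that $\phi(L) \neq 0$ (since $L$ is the only subalgebra of itself that can witness non-elementariness), that $\phi(L)$ is a nilpotent ideal of $L$ contained in $L^2$ (for solvable $L$, every codimension-one subspace containing $L^2$ is a maximal subalgebra), and translates this nontrivial nilpotent ideal inside the decomposition $L^2 = (L^2)^2 \dot{+} T$ into the forced vanishing of the $T$-action. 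Either way, the delicate point is the simultaneous control of properness of $S$ in $L$ and nonvanishing of $\phi(S)$, where the interplay between the internal structure of $L^2$ and the ambient $L$ does the real work.
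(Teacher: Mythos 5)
The easy direction and the reduction to ``show the elementary solvable algebra $L^2$ is nilpotent'' are fine, and your remark that elementariness of $L^2$ alone cannot suffice (the two-dimensional non-abelian algebra) correctly identifies where the real content lies. But that is exactly the point at which your argument stops being a proof and becomes a plan. You never specify the proper subalgebra $S$ with $\phi(S)\neq 0$ that is supposed to deliver the contradiction, nor do you explain how the ``alternative route'' converts $0\neq\phi(L)\subseteq L^2$ into the vanishing of the $T$-action on $(L^2)^2$; you explicitly defer ``the delicate point'' of simultaneously arranging $S\subsetneq L$ and $\phi(S)\neq 0$. Since the lemma is only nontrivial in positive characteristic (in characteristic zero $L^2$ is automatically nilpotent for solvable $L$), and since it is precisely in positive characteristic that one must work to produce such an $S$, the missing construction is the whole proof, not a routine verification. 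There is also a secondary soft spot: your appeal to \cite{elem1} for the decomposition $E=E^2\dot{+}T$ of a solvable elementary algebra with $E^2$ abelian and $T$ a torus is itself usually derived from the very statement you are trying to prove (elementary $\Rightarrow$ $E$-algebra, then ``solvable $E$-algebra $\Rightarrow$ derived algebra nilpotent''), so as written the argument risks circularity.

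For comparison, the paper's proof is a two-line observation: if $L$ is minimal non-elementary then every proper subalgebra $S$ satisfies $\phi(S)=0\subseteq\phi(L)$, and trivially $\phi(L)\subseteq\phi(L)$, so $L$ is an $E$-algebra; the equivalence ``$L$ solvable $\iff$ $L^2$ nilpotent'' for $E$-algebras over a perfect field is then exactly \cite[Corollary 2.2]{elem2}. If you are permitted to cite that corollary, your entire construction is unnecessary; if you are not, then you must reprove it, which is the step your proposal leaves open.
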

\begin{proof}  Clearly $L$ must be an $E$-algebra and so the result follows from \cite[Corollary 2.2]{elem2}.
\end{proof}
\bigskip

The above lemma easily yields the following generalisation of  \cite[Theorem]{stit}. 

\begin{theor}\label{t:solvalgclosed} Let $L$ be a solvable Lie algebra over an algebraically closed field $F$. Then $L$ is minimal non-elementary if and only if $L$ has a basis $x, y, z$ with multiplication $[x,y] = \alpha y + z$, $[x,z] = \alpha z$ and $[y,z] = 0$, where $\alpha \in F$.
\end{theor}

The algebra $L$ is called an {\em $A$-algebra} if all of its nilpotent subalgebras are abelian. Then the following result is a special case of \cite[Proposition 2]{ps}.

\begin{theor}\label{t:ps} (Premet and Semenov) Let $L$ be a non-solvable $A$-algebra over an algebraically closed field of characteristic different from $2,3$. Then  $L \cong (S_1 \oplus \ldots \oplus S_n) \dot{+} R$, where $n \geq 1$, $S_i \cong sl_2(F)$ for $1 \leq i \leq n$ and $R$ is the (solvable) radical of $L$.
\end{theor}

\begin{theor}\label{t:algclosed} Let $L$ be any Lie algebra over an algebraically closed field $F$ of characteristic different from $2,3$. Then $L$ is minimal non-elementary if and only if $L$ has a basis $x, y, z$ with multiplication $[x,y] = \alpha y + z$, $[x,z] = \alpha z$ and $[y,z] = 0$, where $\alpha \in F$.
\end{theor}
\begin{proof} Suppose that $L$ is minimal non-elementary and not solvable. Clearly $L$ is an $A$-algebra and so has the form given in Theorem \ref{t:ps}. Clearly $R \neq 0$, since, otherwise, $L$ is elementary.
\par

Choose a basis $e,f,h$ for $S_i$ such that $[h,e] = 2e$, $[h,f] = -2f$, $[e,f] =h$ and consider the subalgebra $B = Fe + Fh + R$ of $L$. This is clearly solvable and elementary, and hence $B^2$ is abelian, by \cite[Corollary 2.2]{elem2} and \cite[Lemma 3.1]{elem1}. It follows that $[e,R^2] = [e,[h,R]] = 0$. Similarly we find that $[f,R^2] = [f,[h,R]] = 0$. Use of the Jacobi identity then shows that $[h,R^2] = [h,[h,R]] = 0$, whence $[S_i,R^2] = [S_i,[h,R]] = 0$. Using the Jacobi identity again gives that $ [Fe + Ff,R] \subseteq [h,R]$, whence $[S_i,[S_i,R]] = 0$.  
\par

Thus, 
$$[S_i,R] = [[S_i,S_i],R] = [[S_i,R],S_i] + [[R,S_i],S_i] = 0.$$

This holds for all $1 \leq i \leq n$, and so $L$ is elementary, by \cite[Theorem 4.8]{frat}, a contradiction. The result follows from Theorem \ref{t:solvalgclosed}.
\end{proof}
\bigskip

The {\em abelian socle}, Asoc$L$, of $L$ is the sum of its minimal abelian ideals.

\begin{theor}\label{t:perfect} Let $L$ be a solvable Lie algebra over a perfect field $F$. Then $L$ is minimal non-elementary  if and only if 
\begin{itemize} 
\item[(i)] $L = L^2 \rtimes
Fx$, where $L^2$ is abelian and $0 \neq \phi(L) = Asoc L$ is the
biggest ideal of $L$ properly contained in $L^2$, or 
\item[(ii)] $L$ is the three-dimensional Heisenberg algebra. 
\end{itemize}
\end{theor}
\begin{proof}
As before, $L$ is an $E$-algebra and so $L^2$ is nilpotent, by \cite[Corollary 2.2]{elem2}. Suppose that $L$ is not nilpotent. Then
$\phi(L) \neq L^2$ (see, for example, \cite[section 5]{frat}), so
there is a maximal subalgebra $M$ of $L$ such that $L = L^2 + M$.
Choosing $B$ to be a subalgebra minimal with respect to the property that $L =
L^2 + B$ we have $L^2 \cap B \leq \phi(B) = 0$ (see \cite[Lemma
7.1]{frat}), so $L = L^2 \oplus B$ and $B$ is abelian. Moreover, $L^2$ is nilpotent
and elementary, and so abelian. 
\par

 Let $A$ be a minimal abelian ideal of $L$ and suppose that $A \not \subseteq \phi(L)$. Then there is a maximal subalgebra $M$ of $L$ with $A \not \subseteq M$, so $L = A \dot{+} M$. Since $M$ is elementary, $\phi(M) = 0$ and so $\phi(L) \subseteq A$, which yields $\phi(L)=0$, a contradiction. Hence Asoc$L \subseteq \phi(L)$. Let $K$ be an ideal of $L$ with Asoc$L \subset K \subseteq L^2$. If $K \neq L^2$ then $K + B$ is elementary and so splits over Asoc$L$: say, $K+B =$ Asoc$L \dot{+} C$. But now
\[ [L,C \cap L^2] = [B,C \cap L^2]  \subseteq  [\hbox{Asoc}L +C,C \cap L^2] \subseteq C \cap L^2,
\]
so $C \cap L^2$ is an ideal of $L$, whence Asoc$L \cap C \neq 0$, a contradiction. It follows that $0 \neq \phi(L) = Asoc L$ is the
biggest ideal of $L$ properly contained in $L^2$.
\par

Now suppose that dim$B > 1$ and let $x \in B$. Then $C = L^2 + Fx$ is elementary and so $L^2 \subseteq N(C)= {\rm Asoc} (C)$ by Theorem 7.4 of \cite{frat}, and $L^2$ is completely reducible as an $Fx$-module. Write $L^2 = \oplus_{i=1}^{r} C_{i}$, where $C_{i}$ is an
irreducible $Fx$-module for $1 \leq i \leq r$. Then the minimum polynomial of the restriction of ad$x$ to $C_{i}$ is irreducible for each $i$, and so $\{({\rm ad} x)|_{L^2}: x \in B \}$ is a set of
commuting semisimple operators. Let $\Omega$ be the algebraic closure of $F$ and put $L_{\Omega} = L \otimes_{F} \Omega$, and so
on. Then $\phi(L_{\Omega}) = \phi(L)_{\Omega}$, by \cite{eld}.  Also, as $F$ is perfect, $\{({\rm ad} x)|_{L^2_{\Omega}}: x \in B_{\Omega} \}$ and $\{({\rm ad} x)|_{\phi(L)_{\Omega}}: x \in B_{\Omega} \}$ are sets of simultaneously diagonalizable linear maps. Let $f_1, \ldots, f_s, c_1, \ldots, c_t$ be a basis of these common eigenvectors, where $f_i \in \phi(L)_{\Omega}$, $c_i \in L^2_{\Omega} \setminus \phi(L)_{\Omega}$. But now $$M = \bigoplus_{i=2}^s Ff_i \bigoplus_{i=1}^t Fc_i + B_{\Omega}$$ is a maximal subalgebra of $L_{\Omega}$ and $\phi(L)_{\Omega} \not \subseteq M$, a contradiction. Hence dim$B = 1$ and we have (i). 
\par

If $L$ is nilpotent then (ii) holds as in \cite[Theorem 4.5]{elem2}. The converse holds as in \cite[Theorem 4.5]{elem2}
\end{proof}
\bigskip

Note that any algebra satisfying the conditions specified in Theorem \ref{t:perfect} (i) are Lie algebras. There are also algebras satisfying these conditions which have dimension greater than three, unlike those described in Theorem \ref{t:solvalgclosed}, as the following example shows.

\begin{ex} Let $L$ be the five dimensional Lie algebra over the real field with basis $e_1, e_2, e_3, e_4, e_5$ and mutiplication given by $[e_1,e_2] = -[e_2,e_1]= e_3+e_4$, $[e_1,e_3]=-[e_3,e_1]=-e_2+e_5$,  $[e_1,e_4]=-[e_4,e_1]=e_5$, $[e_1,e_5]=-[e_5,e_1]=-e_4$, all other products being zero. Then it is straightforward to check that $L^2 = \R e_2 + \R e_3+\R e_4+\R e_5$ is abelian, and that $L$ has the unique minimal ideal $\R e_4 +\R e_5= \phi(L)$. Over the complex field this is an elementary supersolvable Lie algebra.
\end{ex}


\begin{thebibliography}{1}

\bibitem{eld} {\sc A. Elduque}, `A note on the Frattini subalgebra of a nonassociative
algebra', Nonassociative algebraic models (Zaragoza, 1989), 119--129, Nova Sci. Publ.,
Commack, NY (1992).

\bibitem{ps} {\sc A.A. Premet and K.A .Semenov}, `Varieties of residually finite Lie algebras', {\em Math. USSR Sbornik} {\bf 65} (1990), 109--118.

\bibitem{stit} {\sc E. Stitzinger and K. Stagg}, `Minimal non-elementary Lie algebras', {\em Proc. Amer. Math. Soc.} {\bf 139} (2011), 2435--2437

\bibitem{frat} {\sc D.A. Towers}, `A Frattini theory for algebras', {\em
Proc. London Math. Soc.} (3) {\bf 27} (1973), 440--462.

\bibitem{elem1} {\sc D.A. Towers}, `Elementary Lie algebras', {\em J. London Math. Soc.} (2) {\bf 7} (1973), 295--302.

\bibitem{elem2} {\sc D.A. Towers and V.R. Varea}, `Elementary Lie algebras and Lie A-algebras', {\em J. Algebra} {\bf 312} (2007), 891--901.

\bibitem{elem2} {\sc D.A. Towers and V.R. Varea}, `Further results on elementary Lie algebras and Lie A-algebras', to appear in {\em Comm. Algebra}.

\end{thebibliography}
\end{document}